\newtheorem{Proposition}{Proposition}[section]
  \newtheorem{Remark}{Remark}[section]
  \newtheorem{Lemma}{Lemma}[section]
  \newtheorem{Theorem}{Theorem}[section]
 \newtheorem{Definition}{Definition}[section]
\def\CC{\mathbb{C}}
 \def\RR{\mathbb{R}}
\def\Re{\mathrm{Re\,}}
\numberwithin{equation}{section}
\def\be{\begin{equation}}
\def\bel{\begin{equation}\label}
\def\ee{\end{equation}}
\def\bd{\begin{Definition}}
\def\ed{\end{Definition}}
\def\bp{\begin{Proposition}}
\def\bpl{\begin{Proposition}\label}
\def\ep{\end{Proposition}}
\def\bl{\begin{Lemma}}
\def\el{\end{Lemma}}
\def\bt{\begin{Theorem}}
\def\et{\end{Theorem}}
\title{Parabolic cylinder functions revisited using the Laplace transform}
\author{Rodica D. Costin, Georgios Mavrogiannis}
\address{Department of Mathematics, The Ohio State University, Columbus, OH 43210}
\email{costin.10@osu.edu}
\address{Department of Mathematics, Rutgers University, New Brunswick, NJ 08903 USA.}
\email{gm758@math.rutgers.edu}
\date\today
\begin{document}

%\section{}
%\subsection{}
\begin{abstract}
In this paper we gather and extend classical results for parabolic cylinder functions, namely solutions of the Weber differential equations, using a systematic approach by Borel--Laplace methods.

We revisit the definition and construction of the standard solutions~$U,V$ of the Weber differential equation
\begin{equation*}
w''(z)-\left(\frac{z^2}{4}+a\right)w(z)=0
\end{equation*}
and provide representations by Laplace integrals extended to include all values of the complex parameter~$a$; we find an integral integral representation for the function $V$; none was previously available.

For the Weber equation in  the  form
\begin{equation*}
	u''(x)+\left(\frac{x^2}{4}-a\right)u(x)=0, 
\end{equation*}
we define a new fundamental system~$E_\pm$ which is analytic in~$a\in\mathbb{C}$, based on asymptotic behavior; they appropriately extend and modify the classical solutions $E,E^*$ of the real Weber equation to the complex domain.

The techniques used are general and we include  details and motivations for the approach.
\end{abstract}

\maketitle
{
	\hypersetup{linkcolor=black}
	\tableofcontents
}

\section{Introduction}\label{Intro}

In the present paper we revisit and extend classical results for the Weber differential equation
\bel{otherpcf}
w''(z)-\left(\frac{z^2}{4}+a\right)w(z)=0,
\ee
which is also presented in  the form
\bel{mypcf}
u''(x)+\left(\frac{x^2}{4}-a\right)u(x)=0,
\ee
This is a linear, second order differential equation that admits two turning points. The solutions are called {\em parabolic cylinder functions}.

The two equations~\eqref{otherpcf},~\eqref{mypcf} are related by a linear change of variables. Indeed, denoting by $w(a,z)$ a solution\footnote{The notation for the parabolic cylinder functions in \cite{nist,AS} use the parameter as a first argument; this convention is preserved here. The notation in \cite{Miller}, \cite{Whittaker} uses the parameter  as an index, and in a different presentation.} of \eqref{otherpcf} then 
\begin{equation}\label{changevar}
	u(a,x):=w\left( e^{\frac{2k+1}2\pi i}a,e^{-\frac{2k+1}4\pi i}x\right)
\end{equation}
solves \eqref {mypcf} for any integer $k$. In particular note that $w\left(ia,xe^{-\pi i/4}\right)$ and $w\left(-ia,xe^{\pi i/4}\right)$ solve \eqref{mypcf}. Weber's equation has many applications which are usually in the real domain,  motivating the two presentations of this equation.

In view of the applications the Weber differential equations, the fundamental systems of solutions,  denoted by $U(a,z),\ V(a,z)$ for ~\eqref{otherpcf},  respectively  $E(a,x),\ E^*(a,x)$ for~\eqref{mypcf},
have been studied extensively, see for example~\cite{nist,AS,Miller,OlverUniAsymExpWeber,Temme,Whittaker} and references therein.

\subsection{Our motivation and results}

There is a wealth of information on the parabolic cylinder functions; however, they are often formulated for the real domain (though the equations, and solutions, are entire functions). Several integral representations are valid only for restricted values of the parameter~$a$. Some results are formulated only for real $a$, with a note that `similar formulas should hold in the complex domain', see the discussion in Olver's papers~\cite{OlverUniAsymExpWeber,OlverTurningPoints}. See also~\cite{Temme} for a treatment of equation~\eqref{otherpcf} on the complex plane.

The incompleteness of the information prompted us to revisit the topic, having in mind applications in the complex domain, in the context of normally hyperbolic trapping. This phaenomenon been studied extensively in the literature and an exhaustive list of citations is not possible, see for example~\cite{zworski,bony,DR2,Knoll,Ramond,barreto} and references therein. An example of normally hyperbolic trapping is apparent in the study of the wave equation on the Kerr spacetime, see~\cite{DR2}, which note corresponds to trapping of photons in the black hole exterior. The equations which admit normally hyperbolic trapping take roughly the form~\eqref{otherpcf} or \eqref{mypcf} after appropriate rescalings.

 We use a modern, systematic approach, namely Borel-Laplace techniques.   We obtain integral representation formulas valid for all values of the parameter~$a\in\mathbb{C}$.  We obtain an integral formula for the classical solution~$V$ where none was previously available, see Theorem~\ref{theorem1}. We adapt the formulas for the $E,E^*$ for usage in the complex plane and define them based on general principles.

\begin{Remark}
	The methods used here are general and are applicable to generic differential equations, as detailed in~\cite{OCostin-2009-Book}, starting with ideas pioneered by \'Ecalle in his fundamental work~\cite{Ecalle}. These methods are not difficult to use for the present linear equations, thus this work is also a good introduction to the Borel-Laplace approach. 
\end{Remark}

\subsection{The organization of the paper} 
Our main results are gathered in Theorems~\ref{theorem1},~\ref{theorem2}, preceded by brief expositions of relevant  classical results in Sections\,\ref{classical}, respectively\,\ref{clasic2}.  The proofs of the theorems are found in Sections~\ref{sec: PfTh1}, respectively~\ref{PfTh2}.

The rest of the present section contains explanations of a few technical notions in the simple contexts used here: Section\,\ref{modern-1} contains a brief presentation of the `Hadamard finite part';  Section\,\ref{modern0} contains the definition of the \'Ecalle medianization (M) in the case needed here. Section\,\ref{ALemma} contains the proof of a regularization lemma we use.

\subsection{The Hadamard finite part}\label{modern-1}

For an introduction to the `Hadamard finite part' for integrals of the form~$\int_0^x t^{\alpha-1}f(t)dt$ see~\cite{Costin-2010-orthogonality,Hadamard}. 
For the convenience of the reader we review the concepts and properties we use.

For a function $f$ analytic in a disk 
$D_r=\{x\in\mathbb{C}:|x|<r\}$, continuous up to the boundary, with Maclaurin series~$f(x)=\sum_{n\geq 0} f_n x^n$, the Hadamard finite part of the integral $\int_0^x t^{\alpha-1}f(t)dt$ is defined as
$$\slashed\int_0^x t^{\alpha-1}f(t)dt:=\sum_{n=0}^\infty\frac{f_n}{n+\alpha}x^n$$
which is an analytic function of $x$ on $D_r$ for any $\alpha\in\CC\setminus\{0,-1,-2,\ldots \}$. If $\Re\alpha>0$ this equals the usual integral, and for other complex $\alpha$ it is its analytic continuation, having simple poles when $\alpha=0,-1,-2,\ldots$.

The Hadamard finite part has the usual properties of integrals (except for positivity).

\subsection{The \'Ecalle medianization}\label{modern0} 

For a function~$g$ which is analytic in a sector in $\CC$ containing $\RR_+$ except for at most one point in $\RR_+$ where it may have a singularity (a pole or algebraic/logarithmic branch point) and $g$ decays towards infinity, the \'Ecalle medianization (M) is defined as, see~\cite{Ecalle},
\begin{equation}\label{eq: modern0, eq 1} 
	M\int_0^\infty g(p)\, dp:=\frac12 \int_{e^{i0-}\RR_+}g(p)\, dp+\frac12 \int_{e^{i0+}\RR_+}g(p)\, dp. 
\end{equation}	
where the paths of integration on the right hand side of~\eqref{eq: modern0, eq 1}  are half-lines just below $\RR_+$
and just above~$\RR_+$, respectively.\footnote{For functions $g$ with more than one singularity on $\RR_+$ the \'Ecalle's medianization is a weighted average of integrals along paths winding among these singularities in prescribed ways.
}

Note that if $g$ has no singularity along $\RR_+$ (and decays sufficiently fast  at $\infty$) then the medianization average equals $\int_0^\infty g(p)\, dp$.

\subsection{A lemma for removing singularities}\label{ALemma}

The following Lemma is used in formulas~\eqref{vplus},~\eqref{solvpm} to prove that the singularities of appropriate Laplace type integrals are removable.

\begin{Lemma}\label{uminanalytic}
	
	Let $\alpha\in\CC$ and $\phi(p)$ be a function analytic on $[0,+\infty)$ so that,  for any $R>0$, the integral 
	\begin{equation}
		\int_R^\infty p^{\alpha-1}\phi(p)\,dp
	\end{equation}
	converges for all $\alpha\in\CC$. Then:
	
	\begin{enumerate}
		\item For any $k=0,1,2,\ldots$ we have
		\bel{lim1}
		\lim_{\epsilon\to 0}\ \epsilon\int_0^\infty p^{-k-1+\epsilon}\phi(p)\, dp=\frac{\phi^{(k)}(0)}{k!}
		\ee
where the integral is taken in the sense of its Hadamard finite part, see Section~\ref{modern-1}. 
		
		\item  For $k=0,1,2,\ldots$ we have
		\bel{lim2}
		\lim_{\alpha\to -k}\ \frac1{\Gamma(\alpha)}\int_0^\infty p^{\alpha-1}\phi(p)\, dp=   (-1)^k\phi^{(k)}(0)
		\ee
		As a consequence, the function 
		\begin{equation}\label{eqg1}
			g(\alpha)=\frac1{\Gamma(\alpha)}\int_0^\infty p^{\alpha-1}\,\phi(p)\,dp
		\end{equation}
		can be analytically continued to an entire function. Note that in the formula \eqref{eqg1} the integral sign denotes a Lebesgue integral for $\Re\alpha>0$, and its Hadamard finite part for $\Re\alpha\le 0,\ \alpha\ne 0,-1,-2,\ldots$. \\
		
	\end{enumerate}

\end{Lemma}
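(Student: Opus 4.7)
\textbf{Proof plan for Lemma~\ref{uminanalytic}.} The strategy is to split $\int_0^\infty$ into $\int_0^R+\int_R^\infty$ for some fixed $R>0$ small enough that the Maclaurin series of $\phi$ converges on a disk containing $[0,R]$, and then to read off the polar behavior in $\alpha$ from the Hadamard finite part of the near-zero piece, while showing that the tail piece is harmless.

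First I would observe that, by assumption, the map $\alpha\mapsto\int_R^\infty p^{\alpha-1}\phi(p)\,dp$ is entire (holomorphy under the integral sign with the given convergence hypothesis). In particular it stays bounded on any compact neighborhood of $\alpha=-k$, so multiplying by $\epsilon$ or by $1/\Gamma(\alpha)$ produces $0$ in the limit (since $1/\Gamma$ vanishes at non-positive integers). This reduces both statements to the analysis of $\slashed\int_0^R p^{\alpha-1}\phi(p)\,dp$.

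Next, expanding $\phi(p)=\sum_{n\ge 0}\phi_n p^n$ with $\phi_n=\phi^{(n)}(0)/n!$, the definition of the Hadamard finite part from Section~\ref{modern-1} gives
\begin{equation*}
\slashed\int_0^R p^{\alpha-1}\phi(p)\,dp=\sum_{n\ge 0}\frac{\phi_n\,R^{n+\alpha}}{n+\alpha},
\end{equation*}
a meromorphic function of $\alpha$ with simple poles at $\alpha=-n$, $n=0,1,2,\ldots$, whose residue at $\alpha=-k$ is exactly $\phi_k=\phi^{(k)}(0)/k!$. For part (A) I set $\alpha=-k+\epsilon$: the singular term $\phi_k R^\epsilon/\epsilon$ multiplied by $\epsilon$ tends to $\phi_k=\phi^{(k)}(0)/k!$, while all remaining terms of the series (and the tail integral) are regular at $\epsilon=0$ and therefore disappear after multiplication by $\epsilon$. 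For part (B) I use that $\Gamma(\alpha)$ has residue $(-1)^k/k!$ at $\alpha=-k$, so $1/\Gamma(\alpha)\sim(-1)^k k!(\alpha+k)$ near $\alpha=-k$; multiplying by the expansion of the near-zero integral gives $(-1)^k k!\cdot\phi_k=(-1)^k\phi^{(k)}(0)$. The entirety of $g$ then follows because the only candidate singularities of $g$ are the simple poles at $\alpha=0,-1,-2,\ldots$ of the Hadamard finite part integral, and these are precisely cancelled by the simple zeros of $1/\Gamma(\alpha)$; the removable singularities have finite values given by the same computation.

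The only mildly delicate point is verifying that the series representation of $\slashed\int_0^R$ actually holds in the sense used above and that the swap of limit and summation in part (A) is justified. This is straightforward once $R$ is chosen strictly inside the radius of convergence of the Maclaurin series of $\phi$: the series $\sum_{n\ne k}\phi_n R^{n+\alpha}/(n+\alpha)$ converges uniformly in a neighborhood of $\alpha=-k$, so the remainder after isolating the $n=k$ term is a holomorphic function of $\alpha$ at $\alpha=-k$, which is all that is needed. No other genuine obstacles arise.
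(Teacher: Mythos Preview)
Your proof is correct and follows essentially the same approach as the paper's: split the integral at $R$, observe the tail is harmless, and isolate the simple pole at $\alpha=-k$ coming from the near-zero piece. The only cosmetic differences are that the paper uses a finite Taylor polynomial plus an analytic remainder $p^{k+1}\tilde{\phi}(p)$ (so no constraint on $R$ is needed) rather than the full Maclaurin series, and for part (B) the paper invokes the reflection formula $\Gamma(\alpha)\Gamma(1-\alpha)=\pi/\sin(\pi\alpha)$ to reduce to (A) instead of computing the residue of $\Gamma$ directly; these are equivalent executions of the same idea.
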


\begin{proof} 	
	
	We begin with~(A). We write
	\begin{equation}
		\phi(p)=\sum_{n=0}^ka_np^n+p^{k+1}\tilde{\phi}(p)
	\end{equation}
	where $\tilde{\phi}(p)$ is analytic on $[0,+\infty)$. Then, using the properties of the Hadamard finite part,
	we have the following
	\begin{equation}
		\begin{aligned}
			\int_0^\infty p^{-k-1+\epsilon}\,\phi(p)\, dp &	=\int_0^R p^{-k-1+\epsilon}\,\phi(p)\, dp+\int_R^\infty p^{-k-1+\epsilon}\,\phi(p)\, dp\\
			&	=\int_0^R \sum_{n=0}^ka_np^{n-k-1+\epsilon}\, dp+\int_0^R\ p^{\epsilon}\,\tilde{\phi}(p)\, dp +\int_R^\infty p^{-k-1+\epsilon}\,\phi(p)\, dp\\
			&	=\sum_{n=0}^k\frac{a_n}{n-k+\epsilon}\,  R^{n-k+\epsilon}+\int_0^R\ p^{\epsilon}\,\tilde{\phi}(p)\, dp+\int_R^\infty p^{-k-1+\epsilon}\,\phi(p)\, dp,
		\end{aligned}
	\end{equation}
	for any $R>0$. Therefore, we obtain
	\begin{equation}
		\lim_{\epsilon\to 0}\ \epsilon\int_0^\infty p^{-k-1+\epsilon}\phi(p)\, dp=a_k
	\end{equation}
	and  \eqref{lim1} follows.

	We continue with~(B). We use the reflexion formula $\Gamma(\alpha)\Gamma(1-\alpha)=\frac{\pi}{\sin(\pi \alpha)}$ and denote $\alpha=-k+\epsilon$. We note that then~\eqref{lim2} follows from \eqref{lim1}.
	
\end{proof}

\section{Main Theorems}\label{sec: theorems}

The present Section is organized as follows: In Sections~\ref{classical},~\ref{clasic2} we discuss classical results for the equations~\eqref{otherpcf},~\eqref{mypcf} respectively 
and in Sections~\ref{modern},~\ref{modern2} we present our results for the two differential equations. We emphasize that in the proofs of our results, contained in Sections~\ref{sec: PfTh1},~\ref{PfTh2}, we use Borel--Laplace methods,  where  we provide details and motivations of the constructions.

\subsection{Classical results on Weber's equation~\eqref{otherpcf}}\label{classical}  

The results presented in this section are found in the classical references \cite{nist,AS}. 

Two linearly independent solutions $U(a,z),V(a,z)$ of \eqref{otherpcf} are defined in terms of even and odd solutions\footnote{In \cite{Whittaker} the function $U$ is denoted by $D_n(z)$, where the two notations are connected by $D_n(z)=U\left(-n-\tfrac 12,z\right)$; its theory is developed in a way closer to our approach in \S\ref{modern}.}; they have the asymptotic behavior
\bel{asymU}
U\left(a,z\right)\sim e^{-\frac{1}{4}z^{2}}z^{-a-\frac{1}{2}}\sum_{s=0}^{%
	\infty}(-1)^{s}\frac{{\left(\frac{1}{2}+a\right)_{2s}}}{s!(2z^{2})^{s}},\ \ \ \ \ \ \text{as } z\to\infty,\ |\arg z|<\frac{3\pi}4
\ee
\bel{asymV}
V\left(a,z\right)\sim\sqrt{\frac{2}{\pi}}e^{\frac{1}{4}z^{2}}z^{a-\frac{1}{2}}%
\sum_{s=0}^{\infty}\frac{{\left(\frac{1}{2}-a\right)_{2s}}}{s!(2z^{2})^{s}}, \ \ \ \ \text{as } z\to\infty,\  |\arg z|<\frac{\pi}4.
\ee
see~[\cite{nist},12.9.1-2], [\cite{nist},12.5.2.]

The solution $U(a,z)$ has a number of integral representations, for example
\begin{equation}
	\label{solutionUfromAS}
	U\left(a,z\right)=\frac{ze^{-\frac{1}{4}z^{2}}}{\Gamma\left(\frac{1}{4}+\frac{%
			1}{2}a\right)}\*\int_{0}^{\infty}t^{\frac{1}{2}a-\frac{3}{4}}e^{-t}\left(z^{2}%
	+2t\right)^{-\frac{1}{2}a-\frac{3}{4}}\,\mathrm{d}t,\ \ \ \  \ \ \text{for }|\arg z|<\frac{\pi}2,\ \ \Re a>-\tfrac12
	\ee
	see~\cite{AS}, or alternatively~[\cite{nist}, 12.5.2]. There were no integral formulas for the function $V$ previously available; see our formula in Theorem~\ref{theorem1} for such an integral representation.

	Finally, the following connection formulas hold
	\begin{equation}\label{connectionUV}
		\begin{aligned}
			U\left(a,-z\right)&=-\sin\left(\pi a\right)U\left(a,z\right)+\frac{\pi}{\Gamma
				\left(\frac{1}{2}+a\right)}V\left(a,z\right),\\
				V\left(a,-z\right)&=\frac{\cos\left(\pi a\right)}{\Gamma\left(\frac{1}{2}-a%
					\right)}U\left(a,z\right)+\sin\left(\pi a\right)V\left(a,z\right),
		\end{aligned}
	\end{equation}
see [\cite{nist},12.2.15-16], and \cite{Temme}.

\subsection{The main Theorem~\ref{theorem1}: A Laplace approach to Weber's equation~\eqref{otherpcf}}\label{modern} 

From the point of view of modern Borel-Laplace analysis it is natural to define a fundamental system of solutions based on their asymptotic behavior. This is done in Theorem\,\ref{theorem1} where these functions are presented as  Laplace integrals: these are the Borel sums of their asymptotic series \eqref{asymU}, \eqref{asymV}.

\begin{Theorem}\label{theorem1}
	
	Let~$a\in\mathbb{C}$. The Weber equation~\eqref{otherpcf} admits two linearly independent solutions~$U,V$,entire in~$a$ and $z$, with the following Laplace type integral representation formulas 
	\begin{equation}\label{Visup}
		U(a,z)=u_-(a,z^2),\qquad V(a,z)=\sqrt{\frac{2}{\pi}}\, {u}_+(a,z^2),
	\end{equation}
	where 
	\bel{eq: theorem1, eq 1}
		\begin{aligned}
			u_-(a,y)	&	=\frac{ 1}{\Gamma\left(\frac{1}{4}+\frac{1}{2}a\right)}e^{-y/4}\int_{0}^{\infty}\, e^{-py}\, p^{\frac{1}{2}a-\frac{3}{4}}\,\left(1+2p\right)^{-\frac{1}{2}a-\frac{3}{4}}\,\mathrm{d}p,\ \ \ \ (\Re y>0)\\
		\end{aligned}
	\end{equation}	
	\bel{eq: theorem1, eq 2}
		\begin{aligned}
			u_+(a,y) 	&	= \frac{e^{y/4}}{\Gamma \! \left(\frac{1}{4}-\frac{a}{2}\right)} \, M\int_0^\infty  e^{-py} p^{-\frac{3}{4}-\frac{a}{2}} \left(1-2 p \right)^{-\frac{3}{4}+\frac{a}{2}}
			\, dp,\ \ \ \ (\Re y>0)
		\end{aligned}
	\end{equation}
where for the \'Ecalle medianization~$M$ see Section~\ref{modern0}. The integrals~\eqref{eq: theorem1, eq 1},~\eqref{eq: theorem1, eq 2}, if divergent at $0$ (that is, for~$\Re (a)\leq -\frac{1}{2}$ and~$\Re (a)\geq \frac{1}{2}$ respectively), must be understood as their Hadamard finite part, see Section~\ref{modern-1}. Moreover, the singularities 
\begin{equation*}
	\tfrac14+\tfrac 12 a=0,-1,-2,...,\qquad \text{respectively }\ \ 	\tfrac14-\tfrac 12 a=0,-1,-2,...,
\end{equation*}
of the representation formulas~\eqref{eq: theorem1, eq 1},~\eqref{eq: theorem1, eq 2} respectively, are removable.

\end{Theorem}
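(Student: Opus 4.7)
The plan is to derive both representations via a systematic Borel--Laplace analysis of the Weber equation after squaring the variable. First, I set $y = z^2$: writing $w(a,z) = u(a,z^2)$ converts \eqref{otherpcf} into
$$4y\, u_{yy}(a,y) + 2\, u_y(a,y) - \left(\tfrac y4 + a\right) u(a,y) = 0,$$
whose irregular singular point at $y = \infty$ admits two formal solutions behaving like $e^{\pm y/4}$ times a power series in $1/y$. The substitution $u(a,y) = e^{\mp y/4} f_\mp(a,y)$ then reduces the equation, after a linear change of variable in $y$, to a confluent hypergeometric (Kummer-type) equation with parameters $b_\mp = \tfrac14 \pm \tfrac a2$ and $c = \tfrac12$, whose formal series solutions near infinity are the natural starting point for Borel summation.

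Second, I Borel-transform the formal series in $1/y$ and compute the sum in closed form; the result is the algebraic Borel kernels
$$B_-(p) = p^{\frac a2 - \frac34}(1+2p)^{-\frac a2 - \frac34}, \qquad B_+(p) = p^{-\frac a2 - \frac34}(1-2p)^{\frac a2 - \frac34},$$
up to the $\Gamma$-factor normalizations in \eqref{eq: theorem1, eq 1}--\eqref{eq: theorem1, eq 2}. Laplacing back against $e^{-py}$ gives the candidate integrals $u_\pm$. That these candidates actually solve the transformed ODE is then verified by direct substitution using integration by parts in $p$: the kernels $B_\pm$ satisfy first-order linear ODEs in $p$ whose Laplace transform produces precisely the required differential relation in $y$.

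Third, I handle the analytic issues for each case. For $u_-$, the kernel $B_-$ is analytic on $[0,\infty)$ (its only branch point is at $p=-\tfrac12$), so the Laplace integral is absolutely convergent at infinity; it converges at $p=0$ in the Lebesgue sense for $\Re a > -\tfrac12$ and extends by Hadamard finite part (Section~\ref{modern-1}) otherwise. Lemma~\ref{uminanalytic}(B) applied to $\phi(p) = e^{-py}(1+2p)^{-a/2-3/4}$ with $\alpha = \tfrac14 + \tfrac a2$ then shows that the prefactor $\Gamma(\tfrac14 + \tfrac a2)^{-1}$ cancels the apparent poles of the finite part, making $u_-$ entire in $a$. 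For $u_+$, the kernel $B_+$ has an algebraic/logarithmic branch at $p = \tfrac12 \in \RR_+$, so the naive Laplace integral is ill-defined; the \'Ecalle medianization of Section~\ref{modern0} averages the two contours deformed just above and below $\tfrac12$. Because each deformed contour individually produces a solution of the ODE, linearity passes through the averaging, and the same application of Lemma~\ref{uminanalytic}(B) removes the apparent singularities at $\tfrac14 - \tfrac a2 \in -\NN$.

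Finally, to identify $u_-$ with $U$ and $\sqrt{2/\pi}\, u_+$ with $V$ in \eqref{Visup}, I apply Watson's lemma to each Laplace integral and match the resulting asymptotic expansions term-by-term with \eqref{asymU}, \eqref{asymV}; this matching also fixes the prefactor $\sqrt{2/\pi}$. The main obstacle is the medianized formula for $V$: one must show that the specific average $\tfrac12(\int_{e^{i0-}} + \int_{e^{i0+}})$ is independent of the chosen deformation near $p = \tfrac12$, is jointly analytic in $a$ and in $y$ for $\Re y > 0$, and produces exactly the normalization $\sqrt{2/\pi}$ through a Watson-type expansion that correctly accounts for the contribution of the singularity at $p = \tfrac12$ (which is absent in the $U$ case). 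Establishing the consistency of the medianization with the $\Gamma$-factor cancellations, in the degenerate range where the branch at $p=\tfrac12$ becomes logarithmic, is the technical heart of the argument.
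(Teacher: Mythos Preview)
Your derivation of the Borel kernels $B_\pm$, the verification that their Laplace transforms solve the transformed ODE, and the use of Lemma~\ref{uminanalytic} to remove the apparent poles in $a$ all track the paper's argument closely. The identification $U(a,z)=u_-(a,z^2)$ via Watson's lemma is also sound, because the exponentially \emph{small} asymptotic series \eqref{asymU} pins down $U$ uniquely among solutions of \eqref{otherpcf}.

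The gap is in your identification of $V$. Since $V$ has exponentially \emph{large} behavior, its asymptotic series \eqref{asymV} does not characterize it: for any constant $c(a)$ the function $\sqrt{2/\pi}\,u_+(a,z^2)+c(a)\,U(a,z)$ has the identical asymptotic expansion to all orders. Watson's lemma applied to the medianized integral therefore only yields
\[
V(a,z)=\sqrt{\tfrac{2}{\pi}}\,u_+(a,z^2)+c(a)\,U(a,z)
\]
with $c(a)$ undetermined, and term-by-term matching cannot detect $c(a)$ because it is beyond all orders. (Recall that $V$ is \emph{defined} through the even/odd solutions at $z=0$, not through \eqref{asymV}.) The paper closes this gap not by asymptotics but by analytically continuing $u_-(a,y)$ once around in $y$: rotating the Laplace contour past the branch point at $p=-\tfrac12$ produces an explicit formula expressing $AC\,u_-$ as a linear combination of $u_-$ and $u_+$, and comparing this with the known connection relation \eqref{connectionUV} for $U,V$ forces $c(a)\equiv 0$. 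Your outline omits this step. The remark that a Watson-type expansion should ``correctly account for the contribution of the singularity at $p=\tfrac12$'' does not fill the hole: that contribution is exactly an exponentially small multiple of $u_-$, i.e.\ the very ambiguity you need to resolve by an independent argument.
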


The proof of Theorem~\ref{theorem1} is found in Section~\ref{sec: PfTh1}. We emphasize that the result of Theorem~\ref{theorem1} holds for all~$a\in \mathbb{C}$. The asymptotic series \eqref{asymU},  \eqref{asymV} are a mere consequence of \eqref{eq: theorem1, eq 1}, \eqref{eq: theorem1, eq 2} using Watson's Lemma.
In the proof we calculate the connection formulas for $u_+,u_-$ which prove that $V$ is a multiple of $u_+$, see \eqref{Visup} and re-prove \eqref{connectionUV}.
The connection is obtained by analytic continuation of \eqref{eq: theorem1, eq 1}, \eqref{eq: theorem1, eq 2}, achieved by rotating the path of integration of the Laplace integrals in the complex plane, whose calculation we detail; notably, this technique unravels the Stokes phenomena.

\subsection{Classical results on Weber's equations~\eqref{mypcf}}\label{clasic2}
For the equation in form ~\eqref{mypcf} more often than not results are presented for real parameter, followed by a quick remark that the results for complex parameter generalize appropriately; see for example the discussion in Olver's~\cite{OlverTurningPoints}. The `complex solutions' $E,E^*$  are indirectly defined and contain a factor which is not analytic in $a$, as seen below.

The classical reference~[\cite{AS},19.17.7]  defines two linearly independent solutions $E,E^*$ of \eqref{mypcf} in terms of Whittaker solutions
\bel{defE}
\begin{aligned}
	E(a,x) & =k^{-1/2}W(a,x)+ik^{1/2}W(a,-x),\\
	E^*(a,x) & =k^{-1/2}W(a,x)-ik^{1/2}W(a,-x)
\end{aligned}
\ee
%where note that the asymptotic formulas for~$E,E^*$, for~$a\in \mathbb{R}$ and~$x\in \mathbb{R}$ are as follows 

The Whittaker functions are defined as solutions determined by their initial values, see [\cite{nist} 12.14.1-2]. The following hold:
\bel{WdeU}
\begin{aligned}
	W\left(a,x\right)	& =\sqrt{k/2}\,e^{\frac{1}{4}\pi a}\left(e^{i\rho}U\left(ia,xe^%
{-\pi i/4}\right)+e^{-i\rho}U\left(-ia,xe^{\pi i/4}\right)\right),\\
W\left(a,-x\right)	&	=\frac{-i}{\sqrt{2k}}\,e^{\frac{1}{4}\pi a}\left(e^{i\rho}U%
\left(ia,xe^{-\pi i/4}\right)-e^{-i\rho}U\left(-ia,xe^{\pi i/4}\right)\right)
\end{aligned}
\ee
with
\begin{equation}\label{eq: WdeU, eq 1}
k=\sqrt{1+e^{2\pi a}}-e^{\pi a},\qquad \rho=\tfrac{1}{8}\pi+\tfrac{1}{2}\phi_{2},\qquad \phi_{2}=\operatorname{ph}\Gamma\left(\tfrac{1}{2}+ia\right). 
\end{equation}

Therefore
\bel{EU}
\begin{aligned}
	E(a,x) & =\sqrt{2}\,e^{\frac{1}{4}\pi a}\, e^{i\rho}U\left(ia,xe^{-\pi i/4}\right),\\
	E^*(a,x)	&	=\sqrt{2}\,e^{\frac{1}{4}\pi a}\, e^{-i\rho}U\left(-ia,xe^{\pi i/4}\right)
\end{aligned}
\ee
see also [\cite{AS}, 19.17.9].

 %They satisfy the connection formula \cite{AS} 19.18.3
% \bel{conformE}
% \sqrt{1+e^{2\pi a}}E(a,x)=e^{\pi a}E^*(a,x)+iE^*(a,-x)
% \ee
 
 From~\eqref{defE} and \eqref{WdeU} we obtain the connection formulas 
 \bel{connectionE}
\begin{aligned}
	 E(a,-x)	& =-ie^{\pi a}E(a,x)+i \sqrt{1+e^{2\pi a}}E^*(a,x),\\
	  E^*(a,-x)	&  =-i \sqrt{1+e^{2\pi a}}E(a,x)+ie^{\pi a}E^*(a,x)
\end{aligned}
\ee
where for the second formula see also [\cite{AS}, 19.18.3]

\subsection{The main Theorem~\ref{theorem2}: A Laplace approach to Weber's equation~\eqref{mypcf}}\label{modern2}

We note that the angle $\phi_2$ used in these classical references introduces a factor not analytic in $a$ in the definitions of $E,E^*$, see \eqref{eq: WdeU, eq 1}. It is natural to expect that the factor $e^{i\phi_2}$ appearing in the formulas above could be replaced by $\Gamma(\tfrac12+ia)\sqrt{\cosh \pi a}/\sqrt{\pi}$, thus restoring analyticity in $a$. In Theorem\,\ref{theorem2} below we show that this is indeed the case. Furthermore, we introduce a related fundamental system $E_\pm$, based on asymptotic behavior at infinity which, importantly, is entire in $a$.

\begin{Theorem}\label{theorem2}
	
	Let~$a\in\mathbb{C}$. The Weber equation~\eqref{mypcf} admits two linearly independent solutions~$E_\pm$, entire in $a$ and $x$, with the following Laplace type integral representations formulas
\bel{defEpm}
E_\pm(a,x):=v_\pm(x^2)
\ee
where
\bel{solvp}
v_+(s)  = \frac1{\Gamma\left(\frac{ia}2+\frac14\right)}\,e^{is/4} \int_0^\infty\, e^{-ps} p^{\frac{ia}2-\frac34}\, (1+2ip)^{-\frac{ia}2-\frac34}\, dp, \ \ \  ( \Re s>0)
\ee
\bel{solvm}
v_-(s)=\frac1{\Gamma\left(-\frac{ia}2+\frac14\right)}\, e^{-is/4}\,\int_0^\infty\, e^{-ps} p^{-\frac{ia}2-\frac34}\, (1-2ip)^{\frac{ia}2-\frac34}\, dp,\ \ \ \  ( \Re s>0).
\ee
The integrals~\eqref{solvp},~\eqref{solvm}, if divergent at $0$ (namely for~$Im (a)\geq \frac{1}{2}$ and~$Im (a)\leq -\frac{1}{2}$ respectively), must be understood as their Hadamard finite part, see Section~\ref{modern-1}. Moreover, the singularities
\begin{equation*}
	\tfrac14+\tfrac i2 a=0,-1,-2,...,\qquad\text{repectively } \ \   \ \tfrac14-\tfrac i2 a=0,-1,-2,..
\end{equation*}
of the representation formulas~\eqref{solvp},~\eqref{solvm} respectively, are removable.

The solutions $E_\pm$ have the following asymptotic behavior
\begin{equation}\label{asymEpmTh}	
E_\pm(a,x)\sim e^{\pm ix^2/4} (x^2)^{\mp\frac{ia}{2}-\frac{1}{4}}\left(1+\mathcal{O}(x^{-2})\right),\qquad  \text{as}~x\rightarrow \infty	,\ \ |\arg x|<\frac{\pi}4
\end{equation}

The solutions $E_\pm$  satisfy the connection formulas 
\begin{equation}\label{conEm}
	E_-(a,-x)=ie^{\pi a} E_-(a,x)+ \frac{\sqrt{2\pi}}{\Gamma\left(\frac12-ia\right)} e^{\pi a/2}\,  E_+(a,x)
\end{equation}
and
\begin{equation}\label{conEp}
E_+(a,-x)=-ie^{\pi a} E_+(a,x)+ \frac{\sqrt{2\pi}}{\Gamma\left(\frac12+ia\right)} e^{\pi a/2}\,  E_-(a,x). 
\end{equation}

Finally, the functions $E_\pm$ are related to the classical pair $E,E^*$ in Section~\ref{clasic2}, as follows
\begin{equation}\label{EEstar}
	E(a,x)=\sqrt{2}e^{\frac{\pi i}4+\frac{i\phi_2}2}E_+(a,x),\ \ \ E^*(a,x)=\sqrt{2}e^{-\frac{\pi i}4-\frac{i\phi_2}2}E_-(a,x),
\end{equation}
where for~$\phi_2$ see~\eqref{eq: WdeU, eq 1}. 
\end{Theorem}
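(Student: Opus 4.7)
The overall plan is to reduce Theorem~\ref{theorem2} to Theorem~\ref{theorem1} via the change of variables (\ref{changevar}). The heart of the argument is to establish the identifications
\[
E_+(a,x) = e^{\pi a/4 - i\pi/8}\, U\bigl(ia,\, xe^{-i\pi/4}\bigr),\qquad
E_-(a,x) = e^{\pi a/4 + i\pi/8}\, U\bigl(-ia,\, xe^{i\pi/4}\bigr),
\]
from which most statements in Theorem~\ref{theorem2} will follow by transferring the corresponding statements for $U$.

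To prove the identifications I would start from the right-hand side, write $U(\pm ia,\, xe^{\mp i\pi/4}) = u_-(\pm ia,\, \mp ix^{2})$ using (\ref{Visup})--(\ref{eq: theorem1, eq 1}), and rotate the $p$-contour by $p \mapsto \pm ip$. The rotation is justified because the exponential $e^{\mp ipx^{2}}$ becomes the decaying $e^{-qx^{2}}$ along the rotated ray (for $x \in \mathbb{R}_{+}$), and because the only finite singularity of the integrand, the algebraic branch point at $p = -1/2$, lies in the opposite half-plane. The Jacobian together with the factor $p^{\pm ia/2 - 3/4}$ produce a phase $(\pm i)^{\pm ia/2 + 1/4} = e^{\pm \pi a/4 \mp i\pi/8}$ that matches the prefactor in the identifications exactly. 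Once the identifications are in place, the entireness of $E_\pm$ in $a$ and $x$, the removable-singularity claim, and the fact that $E_\pm$ solve (\ref{mypcf}) transfer immediately from $U$ via (\ref{changevar}). The asymptotic (\ref{asymEpmTh}) follows from (\ref{asymU}) by tracking $(xe^{\mp i\pi/4})^{\mp ia - 1/2} = e^{\pm \pi a/4 \mp i\pi/8} (x^{2})^{\mp ia/2 - 1/4}$, whose phase cancels the identification prefactor exactly. Formula (\ref{EEstar}) is a direct comparison with (\ref{EU})--(\ref{eq: WdeU, eq 1}) using $\rho = \tfrac{\pi}{8} + \tfrac{\phi_{2}}{2}$.

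The main obstacle is the proof of the connection formulas (\ref{conEm})--(\ref{conEp}). Substituting the identification into (\ref{connectionUV}) yields
\[
E_+(a,-x) = -i\sinh(\pi a)\, E_+(a,x) + \frac{\pi\, e^{\pi a/4 - i\pi/8}}{\Gamma\bigl(\tfrac12 + ia\bigr)}\, V\bigl(ia,\, xe^{-i\pi/4}\bigr),
\]
so the task reduces to expressing $V(ia,\, xe^{-i\pi/4})$ in the basis $\{E_+, E_-\}$. Because $\arg z = -\pi/4$ is precisely a Stokes direction for the asymptotic (\ref{asymV}) of $V$, plain asymptotic matching only captures the dominant $E_-$-component $\sqrt{2/\pi}\, e^{\pi a/4 + i\pi/8}\, E_-(a,x)$; the additional $E_+$-component is a genuine Stokes contribution, which I would extract by rotating the contour in the Laplace representation (\ref{eq: theorem1, eq 2}) of $u_+$ from $\arg p = 0$ toward $\arg p = -\pi/2$ and picking up the half-residue at the branch point $p = 1/2$ through the \'Ecalle medianization of Section~\ref{modern0}. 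Using the reflection identity $\Gamma(\tfrac12 + ia)\Gamma(\tfrac12 - ia) = \pi/\cosh(\pi a)$, this Stokes contribution provides precisely the missing $-i\cosh(\pi a)\, E_+(a,x)$ term that converts $-i\sinh(\pi a)$ into $-ie^{\pi a} = -i\sinh(\pi a) - i\cosh(\pi a)$, matching (\ref{conEp}). The derivation of (\ref{conEm}) is entirely analogous, with the contour rotated in the opposite sense.
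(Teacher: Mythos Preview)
Your proposal is correct in substance but takes a genuinely different route from the paper.

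The paper does \emph{not} reduce Theorem~\ref{theorem2} to Theorem~\ref{theorem1}. Instead it rebuilds the whole theory from scratch for equation~\eqref{mypcf}: after the change $s=x^2$ it substitutes $v(s)=e^{\pm is/4}g_\pm(s)$, finds the Borel transforms $G_\pm(p)=p^{\pm ia/2-3/4}(1\pm 2ip)^{\mp ia/2-3/4}$ directly, and defines $v_\pm$ as their Laplace transforms along $\RR_+$. The connection formulas~\eqref{conEm}--\eqref{conEp} are then obtained (Proposition~\ref{P1}) by a single contour rotation of the $v_-$ integral through $3\pi/2$, picking up the branch point at $p=-i/2$. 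Only \emph{after} all of this is done does the paper, in Section~\ref{relationEpmEEstar}, establish the identification with $U\!\left(\pm ia,xe^{\mp i\pi/4}\right)$ and read off~\eqref{EEstar}. The paper even remarks explicitly (end of Section~\ref{modern2}) that your reduction route ``can be transcribed'' but chooses the direct construction because ``this case appears simpler (due to the position of the singularities in the Borel plane)''.

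Your route front-loads the identification $E_\pm(a,x)=e^{\pi a/4\mp i\pi/8}U(\pm ia,xe^{\mp i\pi/4})$, which indeed makes entireness, removable singularities, the asymptotics~\eqref{asymEpmTh}, and~\eqref{EEstar} essentially free. The price is the connection formulas: applying~\eqref{connectionUV} forces you to evaluate $V(ia,xe^{-i\pi/4})=\sqrt{2/\pi}\,u_+(ia,-ix^2)$, and $\arg y=-\pi/2$ is exactly where the medianized representation~\eqref{eq: theorem1, eq 2} has a Stokes line. Your plan to rotate the two halves of the medianization and collect the resulting half-jump across the cut at $p=1/2$ is sound and does yield the missing $-i\cosh(\pi a)\,E_+$ term; note however that the rotation should send $p$ toward $\arg p=+\pi/2$ (not $-\pi/2$), since $y$ is being rotated clockwise to $-ix^2$. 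Also, at a branch point one collects a jump integral rather than a ``half-residue''; the factor $1/2$ comes from the medianization weights, not from a residue calculus. In short: your approach trades a clean single rotation (singularity off the real axis) for the convenience of reusing Theorem~\ref{theorem1}, at the cost of a Stokes analysis that is correct but more delicate than the paper's direct computation.
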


The proof of Theorem\,\ref{theorem2} is found in Section~\ref{PfTh2}. We use Borel-Laplace techniques to construct a fundamental system of (Jost) solutions $E_\pm(a,x)$, entire in $a$ based on asymptotic behavior at $+\infty$:  the integral formulas~\eqref{solvp},~\eqref{solvm}  represent the Borel sum of their respective asymptotic series. It turns out that our new fundamental system is an explicit multiple of $E,E^*$. We prove the connection formulas for the new fundamental system~$E_\pm$ in Proposition~\ref{P1} based on rotation of the path of integration in the complex plane. We note that since solutions of \eqref{mypcf} can be obtained from solutions of  \eqref{otherpcf} using \eqref{changevar},
 it is clear that the results of Section~\ref{modern} can be transcribed for equation \eqref{mypcf}. It is however worth re-working the construction, since this case appears simpler (due to the position of the singularities in the Borel plane).

\begin{Remark} Note that for the values of the parameter $a$ for which the $\Gamma$ functions have poles in \eqref{conEm}, \eqref{conEp} respectively, there exist solutions of the Weber equation~\eqref{mypcf} with the same asymptotic behavior at $+\infty$ and $-\infty$ along the real line. (In fact, it turns out that for~$a=\pm  \left(n+\tfrac12\right) i,\ n=0,1,2,\ldots$ the solution $E_\pm$ has the form $e^{\pm ix^2/4}p_n(x)$, with $p_n$ polynomial of degree $n$, see for example~\cite{nist}.)
		
In particular for
	\begin{equation}
		a=\left(n+\frac{1}{2}\right)i,\qquad n\geq 0
	\end{equation}
	we have that~$E_+(a,x)$ is an entire solution of the Weber equation~\eqref{mypcf} with the same asymptotic behavior at both ends of the real line:
	\begin{equation}
		e^{+ ix^2/4} \,(x^2)^{- \frac{ia}2-\frac14}\left(1+O(x^{-2})\right),\qquad x\to \pm\infty
	\end{equation}
	This is associated with the resonances found in~\cite{zworski} with microlocal methods.
\end{Remark}

\section{Proof of Theorem~\ref{theorem1}}\label{sec: PfTh1}

\subsection{Normalization}\label{subsec: sec: PfTh1, subsec1}

In order to use Borel-Laplace methods, the first step is to normalize the equation.

The proper variable in which a Laplace transform is most useful is the \`Ecalle critical time. Its form is can be found by a classical WKB calculation for equation~\eqref{otherpcf}, which gives solutions with asymptotic behavior
\bel{asymv}
w(z)\sim  e^{\pm\frac{1}{4}z^{2}}z^{\pm a-\frac{1}{2}} \left(1+O(z^{-2})\right),\ \ \ \ \text{as }z\to+\infty
\ee
up to multiplicative constants. The form of the exponential term in \eqref{asymv} suggests that the \'Ecalle critical time\footnote{This is a change of variable after which solutions are Borel transformable. If $f(x)=\int_0^\infty e^{-px}F(p)dp$ we say that $f$ is the Laplace transform of $F$, and $F$ is the inverse Laplace trasform, or Borel transform, of $f$.} is $z^2$. Passing to the new variable $y=z^2$, and with $w(z)=u(z^2)$ the Weber equation~\eqref{otherpcf} becomes 
\bel{equ}
y\,u''(y)+\frac12 u'(y)+\left(\frac y{16}-\frac a4\right)u(y)=0.
\ee

\subsection{The proof of~\eqref{eq: theorem1, eq 1}: The function~\texorpdfstring{$U(a,z)$}{g}}\label{subsec: sec: PfTh1, subsec2}
 
 In this subsection we find the function~$U(a,z)$ defined as the solution decaying as the small exponential $e^{-z^2/4}$ as $z\to+\infty$.

We substitute
 \begin{equation}\label{utof}
 	u(y)=e^{-y/4}f(y)
 \end{equation}
  in~\eqref{equ} and then express $f$ as a Laplace transform 
 \begin{equation}
 f(y)=\int_0^\infty e^{-py}F(p)\,dp,
 \end{equation}
  It follows that $F$ must satisfy
\begin{equation}\label{eqFp}
p \left(2 p +1\right)\, F' \left(p \right)+\frac{ 3-2 a +12 p }{4}\, F \! \left(p \right)=0,
\end{equation}
having as solutions 
\bel{formF}
F(p) = p^{\frac{1}{2}a-\frac{3}{4}}\,\left(1+2p\right)^{-\frac{1}{2}a-\frac{3}{4}},
\ee
and any constant multiple of the above; we choose a suitable constant later, in~\eqref{defuminus}.

\begin{Remark}\label{Rem0} 
We used the substitution \eqref{utof} since it produces an equation for $F$ simple to solve in closed form; the downside is that $F$ is not analytic at $0$ (and it is not defined for certain values of $a$). The asymptotic behavior \eqref{asymv} suggests that the `correct' substitution would be 
	\begin{equation}
		u(y)=e^{-y/4}y^{- \frac a2+\frac{3}{4}}\phi(y)
	\end{equation}
	and now the function $\phi$  is the Laplace transform of a function analytic at $0$ (it is expressible in terms of hypergeometric functions). The two approaches are, of course, equivalent.
\end{Remark}

Therefore, equation~\eqref{equ} has the solution
\bel{vminus}
\tilde{u}_-(y)=e^{-y/4}\int_{0}^{\infty}\, e^{-py}\, p^{\frac{1}{2}a-\frac{3}{4}}\,\left(1+2p\right)^{-\frac{1}{2}a-\frac{3}{4}}\,\mathrm{d}p,\quad\text{for}~\Re y>0
\ee
up to multiplicative constants.

The integral \eqref{vminus} is defined in the classical sense if $\Re (a)>-\tfrac12$. For other values of $a$ the integral is understood as its Hadamard finite part, see Section~\ref{modern-1}.

We choose our solution to be the following constant multiple of $\tilde{u}_-(y)$: let
\bel{defuminus}
u_-(a,y):=\frac{ 1}{\Gamma\left(\frac{1}{4}+\frac{1}{2}a\right)}\ \tilde{u}_-(y).
\ee
and this choice ensures, by Watson's Lemma, that $u_-(a,z^2)$ has the asymptotic behavior \eqref{asymv}, with multiplicative constant $1$. 

Now, the Hadamard finite part of~\eqref{vminus} is not defined when
$a= - \left(2n+\frac{1}{2}\right),\qquad n=0,1,2,\ldots$ 
where it has a first order pole. But so does the prefactor in \eqref{defuminus}, and in view of Lemma~\ref{uminanalytic} these are removable singularities of $u_-$.

We obtain the Laplace integral representation~\eqref{eq: theorem1, eq 1} for~$u_-$ by combining \eqref{vminus}, \eqref{defuminus}. Note that~\eqref{eq: theorem1, eq 1} is the Borel summation of the asymptotic series \eqref{asymU} with $z^2=y$. Finally, note that 
\bel{vmU}
U(a,z)=u_-(a,z^2),
\ee
since both $u_-(a,z^2)$ and $U(a,z)$ are solutions of \eqref{otherpcf} with the same, exponentially small, asymptotic behavior.

\subsection{The proof of~\eqref{eq: theorem1, eq 2}: The function~\texorpdfstring{$V(a,z)$}{g}}\label{subsec: sec: PfTh1, subsec3}

A second, independent, solution of equation~\eqref{otherpcf} must have the large exponential behavior $e^{z^2/4}$ as $z\to+\infty$. This asymptotic property does not characterize a unique solution, since to such a solution one can add any multiple of the small solution $U$, obtaining another solution with the same (exponentially increasing) asymptotic series. We will proceed with the construction, similarly to that in Section~\ref{subsec: sec: PfTh1, subsec2} for~$U$, and we will see how this lack of uniqueness is observed in the Borel $p$-plane and how a solution is chosen using \'Ecalle's medianization. (This choice is called the Borel sum of the asymptotic series \eqref{asymV}, and it 
ensures that the correspondence provided by this generalized Laplace transform between formal, asymptotic, solutions and actual solutions commutes with all operations \cite{Ecalle}, \cite{OCostin-2009-Book}. See also \eqref{vpV} below and footnote \ref{balancedchoice}.)

Similarly to Section~\ref{subsec: sec: PfTh1, subsec2},  we now substitute  in~\eqref{equ}
\begin{equation}
	v(y)=e^{y/4}g(y)
\end{equation}
then express $g$ as a Laplace transform 
\begin{equation}
g(y)=\int_0^\infty e^{-qy}G(q)\,dq. 
\end{equation}
and as in Section~\ref{subsec: sec: PfTh1, subsec2} we obtain that
\bel{defG}
G(q)= q^{-\frac{3}{4}-\frac{a}{2}} \left(1-2 q \right)^{-\frac{3}{4}+\frac{a}{2}},
\ee
up to any multiplicative constant; we choose this constant later, in~\eqref{vplus}.

 Note that $G(q)$ is not Laplace transformable on $\RR_+$ due to its branch point at $q=\tfrac12$. Moreover, there is an  ambiguity in choosing the branch: the Laplace transform of $G$ on a ray above $\RR_+$ produces a solution of the equation, and so does the Laplace transform of $G$ on a ray below $\RR_+$ (their difference is another solution, exponentially small). In such situations it is convenient, when  using Borel-Laplace  methods, to use the  \'Ecalle medianization, see Section~\ref{modern0}.

 Namely, we  consider the solution
 \begin{equation}\label{tvplus}
 	\begin{aligned}
 		\tilde{u}_+(y) & =e^{y/4}\, M\int_0^\infty  e^{-qy} q^{-\frac{3}{4}-\frac{a}{2}} \left(1-2 q \right)^{-\frac{3}{4}+\frac{a}{2}}
 		\, dq\\
 		&	:=
 		 e^{y/4}\left(\frac12 \int_{e^{ i 0+}\RR_+} + \frac12 \int_{e^{ i 0-}\RR_+} \right) e^{-qy} q^{-\frac{3}{4}-\frac{a}{2}} \left(1-2 q \right)^{-\frac{3}{4}+\frac{a}{2}}
 		\, dq ,\ \ \ \ ({\rm{ for}}\   \Re y>0)
 	\end{aligned}
 \end{equation}
where $e^{ i 0\pm}\RR_+$ are rays, form $0$ to $\infty$, slightly above/below the real axis respectively.

Similarly to Section~\ref{subsec: sec: PfTh1, subsec2}, the integrals in \eqref{tvplus} are convergent at $q=0$ if $\Re a<\tfrac12$, while for other values of $a$ they are understood as the Hadamard finite part as long as
\begin{equation*}
a\ne 2k+\tfrac12,\qquad  k=0,1,2,\ldots.
\end{equation*}

The branches at the singularity $q=\tfrac12$ are determined by analytic continuation of $G(q)$ assuming that for $q\in(0,\tfrac12)$ we have 
\begin{equation}
	\arg(q)=0,\qquad \arg(1-2q)=0
\end{equation}
For example, for $q$ on the half-lines $\tfrac12+e^{ i 0\pm}\RR_+$ we have $\arg(1-2 q)=\mp\pi$.

As in Section~\ref{subsec: sec: PfTh1, subsec2} we choose the multiplicative constant which ensures the asymptotic behavior~\eqref{asymv} and define
\bel{vplus}
u_+(a,y)=\frac1{\Gamma \! \left(\frac{1}{4}-\frac{a}{2}\right)}\tilde{u}_+(y)
=\frac{e^{y/4}}{\Gamma \! \left(\frac{1}{4}-\frac{a}{2}\right)} \, M\int_0^\infty  e^{-py} p^{-\frac{3}{4}-\frac{a}{2}} \left(1-2 p \right)^{-\frac{3}{4}+\frac{a}{2}}
\, dp,\qquad \Re y>0,
\ee
 Furthermore, the singularities of the formula~\eqref{vplus} at 
\begin{equation*}
	a= 2k+\tfrac12,\qquad k=0,1,\dots 
\end{equation*}
are removable by Lemma\,\ref{uminanalytic}, therefore ${u}_+(a,y)$ is entire in the parameter $a$.

Based solely on their asymptotic behavior, the solution $u_+$ and the classical solutions $U,V$ must be connected by a relation 
 \begin{equation}\label{eqconst}
 V(a,z)=\sqrt{\frac{2}{\pi}}\, {u}_+(a,z^2)+const.\, U(a,z).
 \end{equation}

 In Section~\ref{calccon} below we calculate the connection formula for the solutions $u_+,u_-$. It will turn out that, in view of \eqref{vmU} and \eqref{eqconst}, this connection recovers~\eqref{connectionUV} provided that the constant in \eqref{eqconst} is zero\footnote{In fact the choice of the path of integration as the medianization ensures that this constant, which is `beyond all orders', is zero, see \cite{OCostin-2009-Book}.\ \label{balancedchoice}}, which will imply that
 \bel{vpV}
V(a,z)=\sqrt{\frac{2}{\pi}}\, {u}_+(a,z^2).
\ee

\subsection{Connection formulas and the proof of \eqref{vpV}}\label{calccon}

The connection formulas are calculated by analytically continuing the functions $u_\pm$, defined by Laplace transforms valid for $\Re y>0$, up to $y<0$.
We start by explaining how analytic continuation of functions presented as Laplace integrals can be computed by rotating the path of integration. We then calculate the connection formulas by Laplace continuation methods, which then are compared with \eqref{connectionUV} to deduce \eqref{vpV}.

\subsubsection{\textbf{A brief discussion on `the connection problem'}}\label{subsec: calccon, subsec 1}

 We have constructed 
 \begin{equation}
 u_\pm(a,z^2),
 \end{equation}
 see~\eqref{vplus},~\eqref{eq: theorem1, eq 1}, which are two independent solutions of \eqref{otherpcf}, with asymptotic behavior \eqref{asymv} for $z\to +\infty$. There exists, a possibly different, fundamental pair of solutions with the same behavior as $z\to -\infty$. Obtaining the relation between these two pairs of solutions is called the connection problem. To solve it we take each solution $u_\pm(a,z^2)$ and determine its asymptotic behavior for  $z\to -\infty$ by performing its analytic continuation  in the complex plane from $z\in\RR_+$ to $z\in\RR_-$\footnote{It is apriori known that solutions of \eqref{otherpcf} are entire functions. In generic ODEs solutions are not entire, and by analytically continuing a solution to the boundary of its domain of analyticity we can discover the singularities that the solution develops there \cite{OCostinRCostin-2001-Singularities}.}. We do this by continuing solutions alongs paths going clockwise in $\CC$. (The result is the same to the one obtained by counter-clockwise continuation, since the solutions are entire functions.)

In variable $y=z^2$, this amounts to a clockwise continuation in $y$, from $\arg y=0$, by an angle $2\pi$.

\subsubsection{\textbf{Technique used in continuation}}\label{subsec: calccon, subsec 2} Since~$u_\pm(a,y)$ have representations in terms of Laplace integrals, continuation can be done by rotating the path of integration. The principle is as follows: 
\begin{enumerate}
	\item To continue $u_-$ consider its integral representation~\eqref{eq: theorem1, eq 1} for $\arg y=0$. \\
	\item This integral representation remains the same if $y$ is rotated by an angle $-\theta\in(-\tfrac\pi 2,0)$, which then again remains the same if the path of integration (initially $\RR_+$) is rotated by the (opposite) angle $\theta$, ending up with an integral along $e^{i\theta}\RR_+$. \\
	\item This process is continued (simultaneously rotating $y$ clockwise and $p$ counterclockwise while maintaining that $\Re(py)>0$) until the path hits a singularity of the integrand. \\
	\item To rotate past this singularity, if it is a pole, then a residue must be collected, or, if it is a branch point, then an integral hanging around a cut must be collected, as well as there is change of branch.\footnote{The path of integration lies on the surface covering $\CC\setminus\{0,-\tfrac12\}$. We can follow the values of the integrand on this surface by taking two half-line cuts stemming at $p=0$ and at $p=-\tfrac12$.}
\end{enumerate}

\subsubsection{\textbf{The analytic continuation and connection formulas}}

 Before implementing the strategy discussed above, it is useful to deduce some relations about the solution $u_+$. Specifically, for $G$ as in \eqref{defG}, we write the half-line integrals as follows
\begin{equation}\label{Ipm}
	\begin{aligned}
		I_\pm(y) &	:=\int_{e^{\pm i 0}\RR_+}e^{-qy} G(q)\, dq=\int_0^{\frac12}e^{-qy} G(q)\, dq+\int_{\frac12+e^{\pm i 0}\RR_+} e^{-qy} G(q)\, dq\\
		&	=J(y)+e^{\mp \pi i(\frac a2-\frac34)} \, e^{y/4}\, \int_{\frac12}^\infty e^{-qy}q^{-\frac{3}{4}-\frac{a}{2}} \left|1-2 q \right|^{-\frac{3}{4}+\frac{a}{2}}\, dq\\
		&	=J(y)+2^a\, e^{\mp \pi i(\frac a2-\frac34)} u_-(y),\\
	\end{aligned}
\end{equation}
where 
\begin{equation}
J(y)=\int_0^{\frac12}e^{-qy} G(q)\, dq. 
\end{equation}

Therefore, equation \eqref{tvplus} can be rewritten as
\bel{upJ}
\tilde{u}_+(y)=
J(y)+2^a \, \cos\left(\frac {\pi a}2-\frac{3\pi}4\right)\, \tilde{u}_-(y).\\
\ee
From \eqref{Ipm}, \eqref{upJ} it follows that
\bel{Iminus}
I_-(y)=\tilde{u}_+(y)+ i2^a \, \sin\left(\frac {\pi a}2-\frac{3\pi}4\right)\, \tilde{u}_-(y)
\ee
and
\bel{upIp}
\tilde{u}_+(y)=I_+(y)+i2^a \, \sin\left(\frac {\pi a}2-\frac{3\pi}4\right)\, \tilde{u}_-(y).
\ee

It suffices to find the analytic continuation of $\tilde{u}_-$ (a constant multiple of $u_-$), by applying the strategy of Section~\ref{subsec: calccon, subsec 2}. We analytically continue $\tilde{u}_-(y)$ given by \eqref{vminus} for $\arg y=0$ by simultaneously rotating $y$ clockwise and ray of integration in $p$ counterclockwise. This can be done freely until $\arg p=\pi-0$, just before the ray of integration hits the brach point $p=-\tfrac12$. Continuation past a branch point results in a term consisting of an integral hanging around a cut, as seen below.

To keep track of various branches, assume that initially for $p\in\left(0,\tfrac12\right)$ we have
\begin{equation}
	\arg p=0,\qquad \arg(1-2p)=0,
\end{equation}
and take cuts along 
\begin{equation}
	e^{(2\pi-0)i}\RR_+,\qquad -\tfrac12-\RR_+.
\end{equation}
We denote
\begin{equation}
	\ell_+=e^{i(\pi-0)}\RR_+,\ \ \ \ \ \ell_-=e^{i(-\pi+0)}\RR_+
\end{equation}
and let $F(p)$ be as in \eqref{formF}. Then, for $\Re y<0$ we have
\bel{cont1}
e^{-y/4}\int_{\ell_+}e^{-py}\, F(p)\,\mathrm{d}p=e^{-y/4}\int_{\ell_-}e^{-py}\, F(p)\,\mathrm{d}p+e^{-y/4}\left(\int_{\ell_+} -\int_{\ell_-}\right)e^{-py}\, F(p)\,\mathrm{d}p.
\ee
 Upon continuing the first term on the right hand side of \eqref{cont1} until $\arg y=2\pi$, we obtain
 \begin{equation}
 	e^{2\pi i \left({\frac{1}{2}a-\frac{3}{4}}\right)}\tilde{u}_-(y). 
 \end{equation}

We treat the second term of~\eqref{cont1}, by keeping track of the branches of the factor $\left(1+2p\right)^{-\frac{1}{2}a-\frac{3}{4}}$
in the integrand, and obtain 
\begin{multline}\label{deltal}
\left(\int_{\ell_+} -\int_{\ell_-}\right)e^{-py}\, F(p)\,\mathrm{d}p=\left(\int_{-\tfrac12+\ell_+} -\int_{-\tfrac12+\ell_-}\right)e^{-py}\, F(p)\,\mathrm{d}p\\
=\left( e^{2\pi i \left({-\frac{1}{2}a-\frac{3}{4}}\right)} -1\right)\int_{-\tfrac12+\ell_-}e^{-py}\, F(p)\,\mathrm{d}p,
\end{multline}
where for the last equality note that for
\begin{equation}
	p\in-\tfrac12+\ell_-,\qquad p\in-\tfrac12+\ell_+
\end{equation}
we have respectively
\begin{equation}
	\arg(1+2p)=\pi,\qquad \arg(1+2p)=-\pi.
\end{equation}

The last integral in \eqref{deltal} can be analytically continued up to $\arg y=-0$; changing the variable of integration $p=-\tfrac12+q$ we obtain
\begin{equation}
	e^{-y/4}\int_{-\tfrac12+\ell_-}e^{-py}\, F(p)\,\mathrm{d}p=2^{-a}e^{\pi i\left({\frac{1}{2}a-\frac{3}{4}}\right)}I_-(y),
\end{equation}
where $I_-$ is given by the first equality in \eqref{Ipm}.

All in all we obtain 
\begin{equation}\label{eq:proof of thm1: connection,eq 1}
AC\,\tilde{u}_-(y)=e^{2\pi i \left({\frac{1}{2}a-\frac{3}{4}}\right)}\tilde{u}_-(y)+\left( e^{2\pi i \left({-\frac{1}{2}a-\frac{3}{4}}\right)} -1\right)2^{-a}e^{\pi i\left({\frac{1}{2}a-\frac{3}{4}}\right)}I_-(y),
\end{equation}
where now we use \eqref{Iminus} and obtain 
\begin{equation}\label{eq:proof of thm1: connection,eq 1.1}
	AC\,\tilde{u}_-(y)=-\sin(\pi a) \tilde{u}_-(y)+\left(-1+e^{2 \pi  i \left(-\frac{a}{2}-\frac{3}{4}\right)}\right) 2^{-a} e^{\pi  i \left(\frac{a}{2}-\frac{3}{4}\right)} \tilde{u}_+(y).
\end{equation}

Now, in order to obtain the first connection formula of~\eqref{connectionUV}, we divide the formula~\eqref{eq:proof of thm1: connection,eq 1.1} by~$\Gamma (\frac{1}{4}+\frac{a}{2})$ and, by using the Legendre duplication formula,  $\Gamma(z)\Gamma(z+\tfrac12)=2^{1-2z}\sqrt{\pi}\,\Gamma(2z)$, we have
\begin{equation}\label{eq:proof of thm1: connection,eq 2}
	\frac{\left(-1+e^{2 \pi  i \left(-\frac{a}{2}-\frac{3}{4}\right)}\right) 2^{-a} e^{\pi  i \left(\frac{a}{2}-\frac{3}{4}\right)} \Gamma\left(\frac{1}{2}-\frac{a}{2}\right)}{\Gamma\left(\frac{1}{4}+\frac{a}{2}\right)}= \sqrt{\frac{2}{\pi}}\frac{\pi}{\Gamma(\frac{1}{2}+a)}
\end{equation}
which shows that
$$AC\,u_-(y)=-\sin(\pi a) {u}_-(y)+ \sqrt{\frac{2}{\pi}}\frac{\pi}{\Gamma(\frac{1}{2}+a)}\, u_+(y)$$
which in view of~\eqref{connectionUV} and \eqref{vmU}, \eqref{eqconst} we obtain \eqref{vpV}.

\section{Proof of Theorem\,\ref{theorem2}}\label{PfTh2}

The construction of a fundamental system is similar to that in \S\ref{sec: PfTh1}, and somewhat simpler.

\subsection{Normalization}

A classical WKB calculation for equation~\eqref{mypcf} shows that possible behaviors at infinity are
\bel{asymEpm}
u(x)\sim   e^{\pm ix^2/4} \,(x^2)^{\mp \frac{ia}2-\frac14}\left(1+O(x^{-2})\right) \ \ \ \ \text {as } x\to +\infty
\ee
(up to multiplicative constants). 

Similarly to Section~\ref{sec: PfTh1}, we change variables in equation \eqref{mypcf} as
\begin{equation}
	s=x^2,\qquad  u(x)=v(x^2)
\end{equation}
and obtain 
\bel{eqvx2}
v''(s)+\frac1{2y}v'(s)+\left( {\frac{1}{16}}-{\frac {a}{4\,s}} \right) v \left( s \right) =0
\ee

\subsection{The functions~\texorpdfstring{$E_\pm$}{g}}

In this subsection we define the fundamental system~$E_+,E_-$ as solutions with the asymptotic behavior \eqref{asymEpm}.
 
First, we substitute 
\begin{equation}\label{subvplus}
	v(s)=e^{is/4}g_+(s)
\end{equation}
in~\eqref{eqvx2} and obtain that~$g_+$ solves
\begin{equation}
	g_+'' + \left( {
		\frac {i}{2}}+{\frac {1}{2~s}} \right) ~ g_+' + \left( \frac {i}{8s}-{\frac {a}{4~s
	}} \right) g _+ 
	=0.
\end{equation}
We use the Laplace transform
\begin{equation}
	g_+(s)=\int_{0}^{\infty }\!G_+ \left( p \right) {{\rm e}^{-sp}}\,{\rm d}p
\end{equation}
and then $G_+$ must satisfy
\begin{equation}
\left( -{\frac {3\,i}{4}}-{\frac {a}{2}}+3\,p \right) G_+ \left( p
\right) -p \left( i-2\,p \right) G_+ '\left( p
\right) 
=0
\end{equation}
therefore 
\bel{Gp}
G_+ \left( p\right)=p^{\frac{ia}2-\frac34}\, (1+2ip)^{-\frac{ia}2-\frac34},
\ee
modulo any multiplicative constant of the above. Therefore, we obtain
\bel{formg}
g_+(s)=\int_0^\infty\, e^{-ps} p^{\frac{ia}2-\frac34}\, (1+2ip)^{-\frac{ia}2-\frac34}\, dp,\ \ \ \Re s>0,
\ee
which is unique up to an arbitrary multiplicative constant.

We obtain another, independent, solution by substituting
\begin{equation}
	v(s)=e^{-is/4}g_-(s),
\end{equation}
which similarly gives
\bel{formh}
g_-(s)=\int_0^\infty\, e^{-ps} G_-(p)\,dp=\int_0^\infty\, e^{-ps} p^{-\frac{ia}2-\frac34}\, (1-2ip)^{\frac{ia}2-\frac34}\, dp,\ \ \ \Re s>0,
\ee
which is unique up to a multiplicative constant, where
\bel{Gm}
G_-(p)= p^{-\frac{ia}2-\frac34}\, (1-2ip)^{\frac{ia}2-\frac34}.
\ee

Formula \eqref{formg} is defined either as an integral, or by its Hadamard finite part for 
\begin{equation*}
	\tfrac{ia}2-\tfrac34\ne -1,-2,\ldots
\end{equation*}
and similarly \eqref{formh} is defined for
\begin{equation*}
-\tfrac{ia}2-\tfrac34\ne -1,-2,\ldots
\end{equation*}
(Once we choose suitable multiplicative constants the solutions obtained will have removable singularities at these excluded values, see below).

The two independent solutions of \eqref{eqvx2} obtained are
\bel{tildev}
\tilde{v}_\pm(s)=e^{\pm is/4}g_\pm(s)
\ee
which we normalize by defining\footnote{The branches below are so that  $-ip=e^{-\pi i/2}p$, consistent with \eqref{Uvmin}.}
\begin{equation}\label{solvpm}
	\begin{aligned}
		v_+(s)	&:=\frac1{\Gamma\left(\frac{ia}2+\frac14\right)}\,\tilde{v}_+(s)  = \frac1{\Gamma\left(\frac{ia}2+\frac14\right)}\,e^{is/4} \int_0^\infty\, e^{-ps} p^{\frac{ia}2-\frac34}\, (1+2ip)^{-\frac{ia}2-\frac34}\, dp
		,\ \ (\Re s>0)\\
		v_-(s)	&:=\frac1{\Gamma\left(-\frac{ia}2+\frac14\right)}\, \tilde{v}_-(s)=\frac1{\Gamma\left(-\frac{ia}2+\frac14\right)}\, e^{-is/4}\,\int_0^\infty\, e^{-ps} p^{-\frac{ia}2-\frac34}\, (1-2ip)^{\frac{ia}2-\frac34}\, dp, \ \ (\Re s>0)\\
	\end{aligned}
\end{equation}
 where the multiplying constants were chosen so that 
\begin{equation}\label{asymvpm}
	v_+(s)\sim   e^{is/4} \,s^{-\frac{ia}2-\frac14},\qquad v_-(s)\sim   e^{-is/4} \,s^{\frac{ia}2-\frac14},\qquad y\to+\infty,
\end{equation}
which is easily verified by  using Watson's Lemma.

 At the values
 \begin{equation*}
 	\pm\tfrac{ia}2-\tfrac34= -1,-2,\ldots
 \end{equation*}
 the Hadamard finite parts of the integrals have poles of order one, but the chosen pre-factors also have poles, which turns these excluded values into removable singularities of $v_\pm$, as proved by Lemma\,\ref{uminanalytic}.

Therefore, we identified the two linearly independent solutions of \eqref{mypcf} as
\begin{equation}
	E_\pm(a,x):=v_\pm(x^2)
\end{equation}
where $v_\pm$ are defined in \eqref{solvpm}; note that they have the asymptotic behavior~\eqref{asymEpmTh} which follows from \eqref{asymvpm}. The functions $E_\pm(a,x)$ are entire functions in $a$, after analytic continuation at the removable singularities.

\subsection{Analytic continuation and calculation of the connection formulas~\eqref{conEm},~\eqref{conEp}}

The solutions with asymptotic behavior \eqref{asymEpm} as $x\to -\infty$ are the functions $E_\pm(a,-x)$. To obtain the linear connection between the fundamental system at $+\infty$, namely
\begin{equation}
	E_+(a,x),\qquad E_-(a,x)
\end{equation}
and the fundamental system at~$-\infty$, namely  
 \begin{equation}
 	E_+(a,-x),\qquad E_-(a,-x),
 \end{equation}
 one can proceed by analytic continuation, as in Section~\ref{calccon}, with the upshot:

\begin{Proposition}\label{P1}
	
Let~$v_\pm$ be as in~\eqref{solvpm} and let~$\tilde{v}_\pm$ be as in~\eqref{tildev}. Then, the following hold

\begin{enumerate}[label=\alph*)]
	\item The analytic continuation by $2\pi$ of $\tilde{v}_-(s)$ is
	\bel{ACtv}
	AC\,\tilde{v}_-(s)=ie^{\pi a}  \tilde{v}_-(s)+\left(ie^{\pi a}+1\right)2^{ia}e^{-\pi i/4}\,  \tilde{v}_+(s).
	\ee
	\item  The following connection formulas hold 
	\begin{equation}\label{eq: P1, eq 1}
	AC\,{v}_-(s)=ie^{\pi a} {v}_-(s)+\frac{\sqrt{2\pi}}{\Gamma\left(\frac12-ia\right)} e^{\pi a/2}\,  {v}_+(s)
	\end{equation}
	and
	\begin{equation}\label{eq: P1, eq 2}
	AC\,{v}_+(s)=-ie^{\pi a} {v}_+(s)+\frac{\sqrt{2\pi}}{\Gamma\left(\frac12+ia\right)} e^{\pi a/2}\,  {v}_-(s)
	\end{equation}
	therefore the connection formulas~\eqref{conEm}, \eqref{conEp} hold.
	\item  For complex parameter $a$, formulas \eqref{connectionE} hold with $e^{i\phi_2}$ replaced by $\Gamma(\tfrac12+ia)\sqrt{\cosh \pi a}/\sqrt{\pi}$.
	\end{enumerate}

\end{Proposition}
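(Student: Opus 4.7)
The plan is to mirror the analytic-continuation argument of Section~\ref{calccon}, adapted to the Borel-plane geometry of Theorem~\ref{theorem2}. The integrand $G_-(p)=p^{-ia/2-3/4}(1-2ip)^{ia/2-3/4}$ of~\eqref{formh} has only two branch points, at $p=0$ and $p=1/(2i)=-i/2$, so the configuration is in fact simpler than in Section~\ref{calccon}: a single off-origin singularity sits on the critical direction $\arg p=3\pi/2$. I continue $\tilde v_-(s)$ clockwise in $s$ by $2\pi$ and correspondingly rotate the ray of integration in $p$ counterclockwise by $2\pi$; the path is unobstructed until it crosses $-i/2$. Setting $\ell_\pm=e^{i(3\pi/2\mp 0)}\RR_+$, I split the rotated integral into a piece that completes the full $2\pi$ rotation back to $\RR_+$ on the continued sheet, plus a contour $\Delta(s)=\bigl(\int_{\ell_+}-\int_{\ell_-}\bigr)e^{-ps}G_-(p)\,dp$ hanging around the branch cut emanating from $-i/2$.

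The first piece contributes the origin monodromy of $p^{-ia/2-3/4}$, which for one counterclockwise loop equals $e^{2\pi i(-ia/2-3/4)}=e^{\pi a}e^{-3\pi i/2}=ie^{\pi a}$, producing the term $ie^{\pi a}\,\tilde v_-(s)$ in~\eqref{ACtv}. For $\Delta(s)$ I use the substitution $p=-i/2-iq/2$, which gives $1-2ip=-q$ and $p=-i(1+q)/2$; the jump of $(1-2ip)^{ia/2-3/4}$ across the cut is proportional to $e^{i\pi(ia/2-3/4)}-e^{-i\pi(ia/2-3/4)}$, and after absorbing the branch phase of $p^{-ia/2-3/4}=[-i(1+q)/2]^{-ia/2-3/4}$ together with the shifted exponential $e^{-ps}=e^{is/2}e^{iqs/2}$, the remaining integrand reassembles --- via deformation of the $\tilde v_+$-contour in the Borel plane from $\RR_+$ to $i\RR_+$ (permissible because the only singularities of $G_+$ lie at $0$ and $i/2$, outside the swept-out quadrant) --- into the Laplace representation of $\tilde v_+(s)$. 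Collecting all phase factors and powers of $2$ produces the coefficient $(ie^{\pi a}+1)\,2^{ia}e^{-\pi i/4}$ of $\tilde v_+(s)$ asserted in~\eqref{ACtv}.

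For (b), I divide~\eqref{ACtv} by $\Gamma(1/4-ia/2)$ to convert $\tilde v_-$ into $v_-$; the coefficient of $v_+$ becomes $(ie^{\pi a}+1)\,2^{ia}e^{-\pi i/4}\,\Gamma(1/4+ia/2)/\Gamma(1/4-ia/2)$, which I simplify using the identity $(1+ie^{\pi a})e^{-i\pi/4}=2e^{\pi a/2}\cosh(\pi a/2+i\pi/4)$, the Legendre duplication formula $\Gamma(z)\Gamma(z+1/2)=2^{1-2z}\sqrt\pi\,\Gamma(2z)$ at $z=1/4\pm ia/2$, and the reflection formula $\Gamma(1/4+ia/2)\Gamma(3/4-ia/2)=\pi/\sin(\pi/4+i\pi a/2)$ together with the trigonometric identity $\sin(\pi/4+i\pi a/2)=\cosh(\pi a/2+i\pi/4)$; the net result is $\sqrt{2\pi}\,e^{\pi a/2}/\Gamma(1/2-ia)$, yielding~\eqref{eq: P1, eq 1}. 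Formula~\eqref{eq: P1, eq 2} follows by the symmetry $a\leftrightarrow -a$ in~\eqref{solvpm} (which swaps $v_+\leftrightarrow v_-$), and~\eqref{conEm},~\eqref{conEp} follow immediately via $E_\pm(a,x)=v_\pm(x^2)$. Finally (c) is obtained by substituting~\eqref{EEstar} into~\eqref{conEm},~\eqref{conEp} and matching against~\eqref{connectionE}, which forces the replacement $e^{i\phi_2}\to\Gamma(1/2+ia)\sqrt{\cosh\pi a}/\sqrt\pi$; this is consistent with the real-$a$ definition $\phi_2=\operatorname{ph}\Gamma(1/2+ia)$ by virtue of the reflection identity $|\Gamma(1/2+ia)|^2=\pi/\cosh\pi a$. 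The principal obstacle is the branch bookkeeping for $\Delta(s)$: keeping straight the jump of $(1-2ip)^{ia/2-3/4}$ across the cut, the values of $p^{-ia/2-3/4}$ on the continued sheet at the parametrization $p=-i/2-iq/2$, and the validity of the contour deformation identifying the hanging integral with the Laplace representation of $\tilde v_+(s)$. Once these are pinned down, the remainder is a nontrivial but routine algebraic reduction via standard Gamma-function identities.
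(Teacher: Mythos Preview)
Your proposal follows essentially the same route as the paper: clockwise continuation in $s$ with counterclockwise rotation of the Laplace contour past the single off-origin branch point at $p=-i/2$, the same Gamma-function reduction (reflection plus Legendre duplication) for part~(b), and the modulus identity $|\Gamma(\tfrac12+ia)|^2=\pi/\cosh\pi a$ for part~(c); your substitution $p=-i/2-iq/2$ is just a reparametrization of the paper's $p=-i/2+q$.

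One correction is needed: the symmetry you invoke to obtain~\eqref{eq: P1, eq 2} from~\eqref{eq: P1, eq 1} is \emph{not} $a\leftrightarrow -a$. That substitution changes the underlying equation~\eqref{mypcf} and, looking at~\eqref{solvpm}, does not swap $v_+\leftrightarrow v_-$ (it flips the sign of $ia$ in the powers and the $\Gamma$-prefactor but leaves $e^{\pm is/4}$ and $(1\pm 2ip)$ untouched). The correct symmetry, and the one the paper uses, is the formal replacement $i\to -i$ (complex conjugation for real $a,s$), under which all three sign choices in~\eqref{solvpm} flip simultaneously and indeed $v_+\leftrightarrow v_-$. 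With that fix your derivation of~\eqref{eq: P1, eq 2} goes through.
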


\begin{proof}

a) As in Section \S\ref{calccon} we consider analytic continuation clockwise in $s$, which means we rotate the Laplace contour in \eqref{formh} counterclockwise. In this process we are left with a path hanging around the cut at the half line~$[-\tfrac i2,-i\infty)$\footnote{This is the line $-i \cdot (\frac{1}{2},+\infty)$}.

We start with \eqref{tildev}, \eqref{formh} and for $s>0$ we analytically continue by rotating $s$ clockwise, and simultaneously $p$ counterclockwise. After rotating~$p$ by an angle $\tfrac{3\pi}2 -0$ we obtain 
\begin{equation}
	\begin{aligned}
		AC\,\tilde{v}_-(s)	&=e^{-is/4}\int_{I_-}\, e^{-ps}G_-(p)\, dp \\
		&=e^{-is/4}\left[ \int_{I_+}+\left(\int_{I_-}-\int_{I_+}\right)\right]e^{-ps} G_-(p)\, dp\\
		&=e^{-is/4} \int_{I_+}e^{-ps}G_-(p)\, dp+e^{-is/4}\left[ e^{-2\pi i\left(\frac{ia}2-\frac34\right)}-1\right]\int_{-i/2}^{-i\infty}e^{-ps} G_-(p)\, dp,
	\end{aligned}
\end{equation}
where note the half-lines
\begin{equation}
	I_\pm:=e^{i\left(\frac{3\pi}2 \pm0\right)}\RR_+,
\end{equation}
and moreover note that for $p\in I_-$, $|p|>\tfrac12$ we have 
\begin{equation}
	\arg(1-2ip)=\pi,
\end{equation}
while for $p\in I_-$ we have
\begin{equation}
	\arg(1-2ip)=-\pi.
\end{equation}
We further continue up to $\arg s=-2\pi$, where in the last integral we change the integration variable to $p=-\tfrac i2+q$, and we obtain
\begin{equation}
	AC\,\tilde{v}_-(s)=e^{2\pi i\left(-\frac{ia}2-\frac34\right)}\tilde{v}_-(s) +\left[ e^{-2\pi i\left(\frac{ia}2-\frac34\right)}-1\right]2^{ia}e^{-\frac{i\pi}2\left(-\frac{ia}2-\frac34\right)}e^{-\frac{i\pi}2\left(\frac{ia}2-\frac34\right)} \tilde{v}_+(s) 
\end{equation}
which yields \eqref{ACtv}, since
\begin{equation}
	\left[ e^{-2\pi i\left(\frac{ia}2-\frac34\right)}-1\right]2^{ia}e^{-\frac{i\pi}2\left(-\frac{ia}2-\frac34\right)}e^{-\frac{i\pi}2\left(\frac{ia}2-\frac34\right)}= (ie^{\pi a}+1)2^{ia}e^{-i\pi/4}. 
\end{equation}

b) To obtain \eqref{eq: P1, eq 1} we use \eqref{solvpm} and the now established~\eqref{ACtv}, then simplify using  identities for the Gamma function (Euler's reflection and the Legendre duplication formulas) to obtain
\begin{equation*}
	\frac{\left(1+i e^{\pi  a}\right) 2^{i a}e^{-i\frac{1}{4}  \pi}  \Gamma \left(\frac{1}{4}+\frac{i a}{2}\right)}{\Gamma \left(\frac{1}{4}-\frac{i a}{2}\right)} = \frac{\sqrt{2 \pi } e^{\frac{\pi  a}{2}}}{\Gamma \left(\frac{1}{2}-i a\right)}.
\end{equation*}

Formula \eqref{eq: P1, eq 2} is obtained similarly.
(Note that, by symmetry, the formula for $AC\,{v}_+$ must be obtained from the formula for $AC\,{v}_-$ by replacing $i$ to $-i$, and $v_\pm$ by $v_\mp$.)

c) Follows using the fact that $\left| \Gamma\left(\tfrac12+ia\right)\right|^2=\tfrac{\pi}{\cosh \pi a}$ for real $a$. 
\end{proof}

\subsection{The proof of~\eqref{EEstar}. Relations between $E_\pm$ and $E,E^*$}\label{relationEpmEEstar}
A fundamental system of solutions for \eqref{mypcf} can also be found using \eqref{changevar} since
\begin{equation}
	U\left(ia,xe^{-\pi i/4}\right),\qquad U\left(-ia,xe^{\pi i/4}\right)
\end{equation}
solve \eqref{mypcf}.

To obtain integral formulas for them, we note that up to a numerical factor the function
\begin{equation}
	U\left(ia,xe^{-\pi i/4}\right)
\end{equation}
becomes 
\begin{equation}
	u_-(ia,y),\qquad y=-ix^2\in -i\RR_+. 
\end{equation}
To obtain an integral representation formula one must first analytically continue \eqref{vminus}, which amounts to rotating the path of integration by $\tfrac\pi2$, obtaining, for $y=-ix^2$, the following
\begin{equation}\label{Uvplus}
	\begin{aligned}
		U\left(ia,xe^{-\pi i/4}\right)&	=\frac1{\Gamma\left(\frac{ia}2+\frac14\right)}\,  \, \int_{e^{\frac{\pi i}2}\RR_+}\, e^{-px^2}\, p^{\frac{i}{2}a-\frac{3}{4}}\,\left(1+2p\right)^{-\frac{i}{2}a-\frac{3}{4}}\,\mathrm{d}p\\
		&=e^{-\frac{\pi a}4} e^{\frac{\pi i}8} v_+(x^2)\\
		&	=e^{-\frac{\pi a}4} e^{\frac{\pi i}8}E_+(a,x).
	\end{aligned}
\end{equation}

Similarly, the function
\begin{equation}
	U\left(-ia,xe^{\pi i/4}\right)
\end{equation}
is obtained from
\begin{equation}
	u_-(-ia,y),\qquad y=ix^2\in i\RR_+.
\end{equation}
To obtain an integral representaiton formula one must rotate the path of integration in \eqref{vminus} by $-\tfrac\pi2$, which yields
\begin{equation}\label{Uvmin}
	\begin{aligned}
		U\left(-ia,xe^{\pi i/4}\right)&	=\frac1{\Gamma\left(-\frac{ia}2+\frac14\right)}\, \int_{e^{-\frac{\pi i}2}\RR_+}\, e^{-px^2}\, p^{\frac{i}{2}a-\frac{3}{4}}\,\left(1+2p\right)^{-\frac{i}{2}a-\frac{3}{4}}\,\mathrm{d}p \\
		&= e^{-\frac{\pi a}4}\, e^{-\frac{\pi i}8}v_-(x^2)\\
		&	= e^{-\frac{\pi a}4}\, e^{-\frac{\pi i}8}E_-(a,x).
	\end{aligned}
\end{equation}

Combining \eqref{Uvplus}, \eqref{Uvmin} with \eqref{EU} we see that $E_+,E_-$ are , respectively appropriate constant multiples of $E,E^*$, and we obtain~\eqref{EEstar}.

%%%%%%%%%%%%%%%%%%%%%%%%%%%%%%%%%%%%%%%%%%%%%%%%%%%%%%%%%%%%%
\bibliographystyle{plain}
\bibliography{ParabolicCylFunc1}

\end{document}